\newtheorem{theorem}{Theorem}[section]
\newtheorem{lemma}[theorem]{Lemma}
\newtheorem{proposition}[theorem]{Proposition}
\numberwithin{equation}{section}
\begin{document}

\baselineskip=15.5pt

\title[Chern scalar curvature and symmetric products]{Chern scalar curvature and symmetric 
products of compact Riemann surfaces}

\author[I. Biswas]{Indranil Biswas}

\address{School of Mathematics, Tata Institute of Fundamental
Research, Homi Bhabha Road, Mumbai 400005, India}

\email{indranil@math.tifr.res.in}

\author[H. Seshadri]{Harish Seshadri}

\address{Indian Institute of Science, Department of Mathematics,
Bangalore 560003, India}

\email{harish@math.iisc.ernet.in}

\subjclass[2010]{32Q10, 32Q05, 14H60.}

\keywords{Gauduchon metric, Chern scalar curvature, symmetric product, pseudo-effectiveness.}

\date{}

\begin{abstract}
Let $X$ be a compact connected Riemann surface of genus $g \,\ge\, 0$, and let ${\rm Sym}^d(X)$, $d 
\ge 1$, denote the $d$--fold symmetric product of $X$. We show that ${\rm Sym}^d(X)$ admits a 
Hermitian metric with
\begin{enumerate}
\item negative Chern scalar curvature if and only if $g \,\ge\, 2$, and 

\item positive Chern scalar curvature if and
only if $d\, >\, g$.
\end{enumerate}
\end{abstract}

\maketitle

\section{Introduction}

The existence of Riemannian metrics with scalar curvature having a fixed sign on a compact 
manifold has been extensively studied over the last few decades. In particular, one knows that a 
metric with negative scalar curvature (which can, in fact, assumed to be a constant) exists on 
any compact smooth manifold while there are topological obstructions to the existence of metrics 
with positive scalar curvature. In the K\"ahler setting, much of the work has focussed on 
finding an algebraic geometric characterization of projective varieties which admit extremal 
K\"ahler metrics, in particular, K\"ahler-Einstein and constant scalar curvature metrics.

The main result of this paper is a complete characterization of symmetric products of a compact 
Riemann surface, in terms of the number of factors in the product and the genus, which admit 
Hermitian metrics with Chern scalar curvature of a fixed sign:

\begin{theorem}\label{main}
Let $X$ be a compact Riemann surface of genus $g \,\ge\, 0$. For $d \,\ge\, 1$, the $d$--fold
symmetric product ${\rm Sym}^d(X)$ of $X$ admits a Hermitian metric with 
\begin{enumerate}
\item negative Chern scalar curvature if 
and only if $g \,\geq\, 2$, and

\item positive Chern scalar curvature if and
only if $d\, >\, g$.
\end{enumerate}
\end{theorem}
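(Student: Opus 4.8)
The plan is to convert the curvature problem into a question about pseudo-effectivity of the (anti)canonical bundle, and then to settle that question on $M={\rm Sym}^d(X)$. For any Gauduchon metric $\omega$ on a compact complex $n$-fold, the total Chern scalar curvature is a positive multiple of $\deg_\omega(-K_M):=\int_M c_1(-K_M)\wedge\omega^{n-1}$, and by the Lamari--Toma duality between the pseudo-effective cone and the Gauduchon cone a real $(1,1)$-class is pseudo-effective iff it pairs non-negatively with every $\omega^{n-1}$. Together with the solution of the Chern--Yamabe problem in the non-positive case, this gives the two criteria I would use: $M$ admits a metric of negative Chern scalar curvature iff $-K_M$ is not pseudo-effective, and of positive Chern scalar curvature iff $K_M$ is not pseudo-effective. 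Thus the theorem becomes: $-K_M$ is non-pseudo-effective iff $g\ge 2$, and $K_M$ is non-pseudo-effective iff $d>g$.

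Next I would fix the numerical geometry. Let $x$ be the class of the point-adding divisor $X_{d-1}=\{D:D\ge p_0\}$ and $\theta=a^*[\Theta]$ the pullback of the theta class under the Abel--Jacobi map $a\colon M\to{\rm Pic}^d(X)\cong J$. Pulling back along the $S_d$-cover $\pi\colon X^d\to M$, using $\pi^*K_M=\sum_i p_i^*K_X-\sum_{i<j}\Delta_{ij}$ and matching K\"unneth components, yields $K_M=(g-d-1)\,x+\theta$. I would record the facts I need: $x$ and $\theta$ are both nef (for $x$ because the divisors $X_{d-1}$ move in a base-point-free family as $p_0$ varies); the generic fibre of $a$ is a $\mathbb{P}^{d-g}$ on which $x$ restricts to $\mathcal{O}(1)$ and $\theta$ to $0$; and Macdonald's relations $x^{d-i}\theta^i=g!/(g-i)!$ ($0\le i\le g$) evaluate all intersection numbers.

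For the positive statement: if $d>g$ the positive-dimensional fibres of $a$ sweep out $M$, and a line $\ell$ in such a fibre has $K_M\cdot\ell=g-d-1<0$; as the lines form a covering (hence movable) family, $K_M$ is not pseudo-effective. If $d<g$ then $g-d-1\ge0$, so $K_M$ is a non-negative combination of the nef classes $x,\theta$, hence nef and pseudo-effective; and if $d=g$ then $M$ is birational to the abelian variety $J$, so $\kappa(M)=0$ and $K_M$ is pseudo-effective. This yields positivity exactly for $d>g$.

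The negative statement splits the same way, and its hard direction is the crux. For $g=0$, $M=\mathbb{P}^d$ and $-K_M$ is ample; for $g=1$, $M$ is a $\mathbb{P}^{d-1}$-bundle over $J$ and a direct computation in its two-dimensional nef cone shows $-K_M=dx-\theta$ is nef; in both cases $-K_M$ is pseudo-effective. The substance is to prove, for all $g\ge2$ and all $d\ge1$, that $-K_M=(d+1-g)\,x-\theta$ is \emph{not} pseudo-effective. For $d\le 2g-2$ this is clean: the curve $\iota(X)=\{p_1+\cdots+p_{d-1}+q\}$ obtained by moving one point is a covering family with $x\cdot\iota(X)=1$ and $\theta\cdot\iota(X)=g$, so $-K_M\cdot\iota(X)=d+1-2g<0$. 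The obstacle is the range $d\ge 2g-1$: there every complete-intersection class $\theta^a x^{d-1-a}$ and every point-moving curve pairs \emph{non-negatively} with $-K_M$, so no movable curve can certify non-pseudo-effectivity, and restricting to subsymmetric products $C_g\subset M$ only makes the class more positive. Here I expect to need the finer structure of $M$ as $\mathbb{P}(E_d)\to J$, where the evaluation maps exhibit $E_d$ as an iterated extension with numerically trivial successive quotients; this instability should force $-K_M$ to restrict to a non-pseudo-effective class on a suitable covering family (reducing to a multiplicity estimate for $\Theta$ on $J$), which then propagates back to $M$. Controlling the anticanonical class of this non-trivial, unstable projective bundle in the regime $d\ge 2g-1$ is where I anticipate essentially all of the difficulty to lie.
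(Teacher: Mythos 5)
Your reduction of the theorem to pseudo-effectivity (via Yang's criteria, which you essentially re-derive from Lamari duality plus the Chern--Yamabe argument), your formula $K_{\mathrm{Sym}^d(X)}=(g-d-1)x+\theta$, your treatment of the positive statement (fiber lines for $d>g$, nefness for $d<g$, $\kappa=0$ for $d=g$), and your handling of $g=0,1$ are all correct and run parallel to the paper. But the proposal has a genuine gap, and you have located it yourself: for $g\ge 2$ you prove that $-K_{\mathrm{Sym}^d(X)}=(d-g+1)x-\theta$ is not pseudo-effective only in the range $d\le 2g-2$, where the point-moving curve gives a negative pairing. For $d\ge 2g-1$ you offer only a speculative plan (instability of the Picard bundle, a multiplicity estimate for $\Theta$) with no argument carried out; since this covers all sufficiently large $d$ for every $g\ge 2$, it is the bulk of the negative statement, not an edge case. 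Your diagnosis that the natural tests fail there is in fact correct --- by Macdonald's relations, $(-K)\cdot\theta^a x^{d-1-a}=\frac{g!}{(g-a)!}\bigl((d+1-g)-(g-a)\bigr)\ge 0$ for all $a$ once $d\ge 2g-1$ --- which shows that no intersection-theoretic computation against these classes can ever close the gap; one needs genuinely finer information about the pseudo-effective cone. (As an aside, your phrase ``no movable curve can certify non-pseudo-effectivity'' is too strong: by Boucksom--Demailly--P\u{a}un--Peternell duality some movable class must certify it; what is true is that none of the visible ones do.)

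The paper closes exactly this gap by quoting Kouvidakis's description of the effective cone of $\mathrm{Sym}^d(X)$ \cite[p.~124, Theorem~3]{Ko1}: in the plane spanned by $\Theta$ and $x$, the pseudo-effective cone is bounded by the ray through the half-diagonal class $\frac{1}{2}\Delta=(d+g-1)x-\Theta$ (and a ray with negative $x$-coefficient and positive $\Theta$-coefficient). Since $-K_{\mathrm{Sym}^d(X)}=(d-g+1)x-\Theta$ and $d-g+1<d+g-1$ precisely when $g\ge 2$, the anticanonical class lies strictly outside this cone for \emph{every} $d\ge 1$, uniformly; the essential input is the extremality of the diagonal ray, which is what your intersection-number tests cannot see. (The same theorem also gives the paper's $g=1$ case: there $d-g+1=d+g-1$, so $-K$ sits on the boundary ray and is pseudo-effective --- consistent with your nefness computation, and even more simply with the observation that for $g=1$ one has $-K=\frac{1}{2}\Delta$ with $\Delta$ effective.) So your architecture is the paper's, but without Kouvidakis's theorem (or an independent proof of the rigidity of the diagonal ray) the central case $g\ge 2$, $d\ge 2g-1$ remains unproven.
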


This result is a continuation of our earlier works on the existence of K\"{a}hler metrics with 
holomorphic bisectional curvature \cite{BS1} and Ricci curvature \cite{BS2} with 
fixed sign on symmetric products, and, more generally, Quot schemes associated to Riemann surfaces.

Theorem \ref{main} raises some interesting questions which may have to be tackled by methods 
different from those in this paper:
 
{\it Question 1}: If $d\, >\, g\, \geq\, 0$, does ${\rm Sym}^d(X)$ admit a {\it K\"ahler} metric 
with positive scalar curvature?

{\it Question 2}: If $g \ \ge \ 2$ and $d \ \le \ g $, can ${\rm Sym}^d(X)$ admit a {\it 
Riemannian} metric with positive scalar curvature?

The proof of Theorem \ref{main} is based on a recent result of X. Yang, \cite{Ya}, giving criteria 
for the existence of positive or negative Chern scalar curvature Hermitian metrics on a compact complex 
manifold. These criteria are in terms of the pseudo-effectivity of the canonical or anti-canonical 
line bundles. In effect, we give necessary and sufficient conditions for the pseudo-effectivity (or 
the lack thereof) of the canonical and anti-canonical bundles of a symmetric product of Riemann 
surfaces.
 
\section{Pseudo-effectivity of the canonical and anti-canonical bundles}

\subsection{The canonical line bundle of a symmetric product}

Let $X$ be a compact connected Riemann surface. The genus of $X$ will be denoted by 
$g$. The holomorphic cotangent bundle of $X$ will be denoted by $K_X$.

For any positive integer $d$, let $P_d$ denote the group of permutations of $\{1,\, \cdots,\, d\}$. 
Let $\text{Sym}^d(X)$ be the quotient of $X^d$ by the action of $P_d$ that permutes the factors of 
$X^d$. This $\text{Sym}^d(X)$ is a complex projective manifold of complex dimension $d$. For 
convenience, $\text{Sym}^0(X)$ will denote a point. The holomorphic cotangent bundle of 
$\text{Sym}^d(X)$ will be denoted by $\Omega^1_{\text{Sym}^d(X)}$. For any $1\, \leq\, i\, \leq\, 
d$, let
\begin{equation}\label{pi}
p_i\, :\, X^d\, \longrightarrow\, X
\end{equation}
be the projection to the $i$-th factor. The N\'eron--Severi group $\text{NS}(\text{Sym}^d(X))$ of 
$\text{Sym}^d(X)$ is the subgroup of $H^2(\text{Sym}^d(X),\, {\mathbb Z})$ of Hodge type $(1,1)$. 
We note that $\text{NS}(\text{Sym}^d(X))$ is torsion-free because $H^2(\text{Sym}^d(X),\, {\mathbb 
Z})$ is torsion-free \cite[p.~329, (12.3)]{Ma}.

Let
$$
K_{X,d}\,:=\, K_{\text{Sym}^d(X)}\, =\, \bigwedge\nolimits^d
\Omega^1_{\text{Sym}^d(X)}
$$
be the canonical line bundle of $\text{Sym}^d(X)$. Let
$$
K^{-1}_{X,d}\, :=\, (K_{X,d})^*
$$
be the anti-canonical line bundle of $\text{Sym}^d(X)$.
We will now recall a description of the N\'eron--Severi
class of $K_{X,d}$. Since $\text{Sym}^1(X)\,=\, X$, the class of $K_{X,d}$ is
$2g-2\, \in\, {\mathbb Z}\,=\, \text{NS}(\text{Sym}^1(X))$. Next
assume that $d\, \geq\, 2$. Let $\Delta'$ denote the image of the morphism
$$
\text{Sym}^{d-2}(X)\times X \, \longrightarrow\, \text{Sym}^d(X)\, ,\ \ ((z_1,
\, \cdots,\, z_{d-2}),\, z)\, \longmapsto\, (z_1,\, \cdots,\, z_{d-2},\, z,\, z)\,.
$$
The N\'eron--Severi class of the holomorphic line bundle ${\mathcal O}_{\text{Sym}^d(X)}(\Delta')$
on $\text{Sym}^d(X)$ defined by the divisor $\Delta'$ will be denoted by $\Delta$.
For any holomorphic line bundle $\xi$ on $X$, consider the holomorphic line bundle
$$
\widehat{\xi}\, :=\, \bigotimes_{i=1}^d p^*_i \xi
$$
on $X^d$, where $p_i$ is the projection in \eqref{pi}. The action $P_d$ on $X^d$ lifts to
$\widehat{\xi}$; for any $z\, \in\, X^d$, the action on the fiber $\widehat{\xi}_z$ of the isotropy
subgroup for $z$ is trivial. Hence $\widehat{\xi}$ descends to a holomorphic line bundle on
$\text{Sym}^d(X)$. The element in $\text{NS}(\text{Sym}^d(X))$ corresponding this holomorphic
line bundle will be denoted by ${\mathcal L}_\xi$. This class ${\mathcal L}_\xi$ can
also be constructed as follows. For any $y\, \in\, X$, let $X_y\, \subset\, \text{Sym}^d(X)$
be the image of the morphism
$$
\text{Sym}^{d-1}(X)\, \longrightarrow\, \text{Sym}^d(X)\, ,\ \
(z_1, \, \cdots,\, z_{d-1})\, \longmapsto\, (z_1,\, \cdots,\, z_{d-1},\, y)\, .
$$
Now define
\begin{equation}\label{e0}
{\mathcal L}_{{\mathcal O}(y)}\,:=\, [{\mathcal O}_{\text{Sym}^d(X)}(X_y)]\, \in\,
\text{NS}(\text{Sym}^d(X))
\end{equation}
(see \cite[p.~18]{Ko2}). Note that ${\mathcal L}_{{\mathcal O}(y)}$ is independent of
the choice of $y$.

For an
effective divisor $D\,=\, \sum_{i=1}^n y_i$ on $X$, define ${\mathcal L}_{{\mathcal O}(D)}
\,=\, \sum_{i=1}^n L_{{\mathcal O}(y_i)}$. Finally, for effective divisors $D$, $D'$ on $X$,
define ${\mathcal L}_{{\mathcal O}(D-D')}
\,=\, {\mathcal L}_{{\mathcal O}(D)}-{\mathcal L}_{{\mathcal O}(D')}$ (see \cite[p.~18]{Ko2}).
Now we have
\begin{equation}\label{e1}
K_{X,d}\,=\, {\mathcal L}_{K_X}- \frac{1}{2}\Delta \, \in\, \text{NS}(\text{Sym}^d(X))
\end{equation}
\cite[p.~19, Proposition~2.6]{Ko2}.

A divisor $D$ on $\text{Sym}^d(X)$ is called effective if $H^0(\text{Sym}^d(X),\,
{\mathcal O}_{\text{Sym}^d(X)}(D))\,\not=\, 0$.
The pseudo-effective cone of $\text{Sym}^d(X)$
is the closure of the convex cone in $\text{NS}(\text{Sym}^d(X))_{\mathbb R}\,=\,
\text{NS}(\text{Sym}^d(X))\otimes_{\mathbb Z}\mathbb R$ generated by the
effective divisors.

\subsection{Hyperbolic Riemann surfaces}

\begin{proposition}\label{prop1}
Assume that $g\, \geq\, 2$. Then $K^{-1}_{X,d}$ is not pseudo-effective for any $d$.
\end{proposition}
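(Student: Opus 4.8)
The plan is to translate pseudo-effectivity into intersection theory on $\text{Sym}^d(X)$ and then to exhibit a single nef class against which $K^{-1}_{X,d}$ is negative. Write $x:=\mathcal L_{\mathcal O(y)}\in\text{NS}(\text{Sym}^d(X))$ for the class in \eqref{e0}, and let $\theta$ denote the pullback to $\text{Sym}^d(X)$ of the principal polarization $\theta_J$ under the Abel--Jacobi map $u\colon\text{Sym}^d(X)\to\text{Pic}^d(X)\cong\text{Jac}(X)$. Since $\mathcal L_{K_X}=(2g-2)x$ and, by a standard computation (intersecting with the test curves introduced below), $\tfrac12\Delta=(d+g-1)x-\theta$, formula \eqref{e1} gives
$$K^{-1}_{X,d}=\tfrac12\Delta-\mathcal L_{K_X}=(d+1-g)\,x-\theta .$$
I will use Poincar\'e's formula in the form $x^{d-a}\theta^{a}=g!/(g-a)!$ for $0\le a\le\min(d,g)$, the intersection vanishing once $a>g$. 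The guiding principle is that a pseudo-effective class meets $N^{\,d-1}$ non-negatively for every nef class $N$, since $N^{d-1}$ is a limit of complete-intersection, hence movable, curves. Thus it suffices to produce one nef $N$ with $K^{-1}_{X,d}\cdot N^{d-1}<0$.

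First I would dispose of the range $1\le d\le 2g-2$. Here the ``move one point'' curve $C=x^{d-1}$ (fix $d-1$ generic points of $X$ and let the last vary) is covering, so its class lies in the movable cone, and $K^{-1}_{X,d}\cdot C=(d+1-g)-g=d+1-2g<0$; this already shows non-pseudo-effectivity. The difficulty is that $K^{-1}_{X,d}\cdot C\ge 0$ as soon as $d\ge 2g-1$, so for large $d$ one needs a strictly more $\theta$-heavy test class.

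To locate the right direction I would look at the small diagonal $\delta\colon X\to\text{Sym}^d(X)$, $z\mapsto d\,z$. Using $\delta^{*}\mathcal L_\xi=\xi^{\otimes d}$ and $[d]^{*}\theta_J\equiv d^{2}\theta_J$ (multiplication by $d$ on $\text{Jac}(X)$) one finds $x\cdot\delta=d$ and $\theta\cdot\delta=d^{2}g$, whence $K^{-1}_{X,d}\cdot\delta=d(d+1)(1-g)<0$: the small diagonal is the extremal, most $\theta$-heavy curve. This points to the nef class dual to it,
$$N:=dg\,x-\theta=(d-1)(g-1)\,x+\tfrac12\Delta ,$$
which satisfies $N\cdot\delta=0$. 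Granting that $N$ is nef, a direct expansion of $K^{-1}_{X,d}\cdot N^{d-1}$ using the intersection numbers above yields a strictly negative number for every $g\ge 2$ (e.g. $-4,\,-20,\,-112,\ldots$ in the first cases with $g=2$), which completes the argument.

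The one genuinely non-formal step, and the place where I expect the real work to lie, is the nefness of $N$ for $d\ge 2g-1$; equivalently, the uniform bound $\theta\cdot\gamma\le dg\,(x\cdot\gamma)$ for every irreducible curve $\gamma$. For a general curve $X$ there are no complete covering families of reduced divisors, so this cannot be seen by an explicit diagonal-avoiding family and must come from the fibration. I would argue through the structure of $u$: for $d\ge 2g-1$ it realizes $\text{Sym}^d(X)$ as a projective bundle $\mathbb P(E_d)$ over the abelian variety $\text{Jac}(X)$, with $E_d$ the direct image of a Poincar\'e bundle; the semistability and numerical positivity of $E_d$ relative to $\theta_J$ should pin down the Mori cone as that generated by a fibre line and by $\delta$, and hence the nefness of $N$. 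Everything else reduces to the displayed arithmetic.
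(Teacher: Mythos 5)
Your handling of the range $1\le d\le 2g-2$ is correct: the ``move one point'' curves form a covering family, hence their class pairs non-negatively with every pseudo-effective class, and $K^{-1}_{X,d}\cdot C=d+1-2g<0$ there. The fatal problem is in the complementary range $d\ge 2g-1$, i.e.\ exactly where you locate the ``real work''. Even granting the nefness of $N=dg\,x-\theta$ (which happens to be true), your claim that $K^{-1}_{X,d}\cdot N^{d-1}<0$ for every $g\ge 2$ is false for large $d$. Take $g=2$, $d=5$, so $K^{-1}_{X,d}=4x-\theta$, $N=10x-\theta$, and the only nonzero Poincar\'e numbers are $x^5=1$, $x^4\theta=x^3\theta^2=2$. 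Then
\[
K^{-1}_{X,d}\cdot N^{4}\;=\;4x\cdot N^{4}-\theta\cdot N^{4}\;=\;4(10000-8000+1200)-(20000-8000)\;=\;12800-12000\;=\;800\;>\;0 .
\]
In closed form, for $g=2$ one finds $K^{-1}_{X,d}\cdot N^{d-1}=(2d)^{d-3}\bigl(d^{3}-4d^{2}-3d-2\bigr)$, which is negative precisely for $d\le 4$: your sample values $-4,-20,-112$ are the cases $d=2,3,4$, and the sign flips permanently at $d=5$. So the test class $N^{d-1}$ detects nothing in the regime where you need it.

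Moreover this cannot be repaired by a cleverer choice of nef classes, so the gap is structural, not computational. For $d\ge 2g-1$ the nef cone meets the plane spanned by $x$ and $\theta$ exactly in the cone generated by $\theta$ and $N$ (nothing beyond $N$ is nef because $N\cdot\delta=0$, nothing beyond $\theta$ because $\theta$ kills the lines in the fibres of $\phi$). Hence, at least for generic $X$ (where $\mathrm{NS}$ has rank two, so every nef class lies in this plane), any product of $d-1$ nef classes is a non-negative combination of the classes $N^{a}\theta^{d-1-a}$, whose ``slopes'' $(\theta\cdot\Gamma)/(x\cdot\Gamma)$ are bounded by a constant depending only on $g$ (for $g=2$, by $4d(d+1)/(d^{2}+d+2)<4$); but ruling out pseudo-effectivity of $K^{-1}_{X,d}=(d+1-g)x-\theta$ requires a class of slope $>d+1-g$, which grows with $d$. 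One is therefore forced to use genuinely movable (non complete-intersection) curve classes of large slope, and the existence of those is equivalent to knowing where the pseudo-effective boundary sits --- which is precisely the content of Kouvidakis's theorem that the paper invokes: by \cite[Theorem~3]{Ko1} the pseudo-effective cone meets this plane in the cone spanned by $\tfrac12\Delta=(d+g-1)x-\theta$ and a ray in the fourth quadrant, and since $d-g+1<d+g-1$ for $g\ge 2$, the class \eqref{e4} lies outside it. A secondary gap in your write-up is the nefness of $N$ itself: deducing it from semistability of the Picard bundle $E_d$ does not work, since slope-semistability with respect to one polarization on a base of dimension $\ge 2$ does not control restrictions to arbitrary curves and hence does not determine the Mori cone of $\mathbb P(E_d)$; nefness of $N$ is true, but needs a separate argument (it also follows from \cite{Ko1}) --- and, as shown above, even with it your argument fails for $d\ge 2g+1$.
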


\begin{proof}
Let
\begin{equation}\label{phi}
\phi\, :\, \text{Sym}^d(X)\, \longrightarrow\, \text{Pic}^d(X)
\end{equation}
be the morphism defined by $(z_1, \, \cdots,\, z_d)\, \longmapsto\,{\mathcal O}_X(\sum_{i=1}^dz_i)$.
For a theta line bundle $\theta_d$ on $\text{Pic}^d(X)$, let
$$
\Theta\,\in\, \text{NS}(\text{Sym}^d(X))
$$
be the N\'eron--Severi class of the pullback $\phi^*\theta_d$. We have
\begin{equation}\label{e2}
\frac{1}{2}\Delta\,=\, (d+g-1){\mathcal L}_{{\mathcal O}(y)} -\Theta\, ,
\end{equation}
where ${\mathcal L}_{{\mathcal O}(y)}$ is the class in \eqref{e0} \cite[p.~1119, Lemma~2.1(i)]{Pa};
we note that ${\mathcal L}_{{\mathcal O}(y)}$ is denoted by $x$ in both \cite{Pa} and \cite{Ko1}.

Note that ${\mathcal L}_{K_X}\,=\, (2g-2){\mathcal L}_{{\mathcal O}(y)}$, because
$\text{degree}(K_X)\,=\, 2g-2$. Therefore, combining \eqref{e1} and \eqref{e2}, we have
$$
K_{X,d}\,=\, (g-d-1){\mathcal L}_{{\mathcal O}(y)} +\Theta\, .
$$
This implies that
\begin{equation}\label{e4}
K^{-1}_{X,d}\,=\, (d-g+1){\mathcal L}_{{\mathcal O}(y)} -\Theta\, .
\end{equation}
Since $d-g+1\, <\, d+g-1$, the point $(-1,\, d-g+1)\, \in\, {\mathbb R}^2$
lies in the open sector determined by the two half-lines $t\cdot (-1,\, d+g-1)$ and $t\cdot (0,\,-1)$,
$t\, \geq\, 0$, going anti-clockwise from $t\cdot (-1,\, d+g-1)$. Now from
\cite[p.~124, Theorem~3]{Ko1} and \eqref{e4} it follows that
$K^{-1}_{X,d}$ is not pseudo-effective. To see this consider the plane in
$\text{NS}(\text{Sym}^d(X))_{\mathbb R}$ generated by $\Theta$ and
${\mathcal L}_{{\mathcal O}(y)}$. First note that $\Theta\, \in\,
\text{NS}(\text{Sym}^d(X))_{\mathbb R}$, which corresponds to the point $(1,\, 0)\,\in\,
{\mathbb R}^2$, lies in the complement of the closure of the above sector
bounded by $t\cdot (-1,\, d+g-1)$ and $t\cdot (0,\,-1)$, $t\, \geq\, 0$, while $\Theta$ is
pseudo-effective because a theta line bundle on $\text{Pic}^d(X)$ is ample. The intersection
of the pseudo-effective cone with the plane in $\text{NS}(\text{Sym}^d(X))_{\mathbb R}$
generated by $\Theta$ and ${\mathcal L}_{{\mathcal O}(y)}$ is the cone generated
by the half-line $t\cdot (-1,\, d+g-1)$, $t\,\geq\, 0$, and a half-line in the fourth
quadrant (where coefficient of $\Theta$ is positive and the coefficient of ${\mathcal
L}_{{\mathcal O}(y)}$ is negative) (see \cite[p.~124, Theorem~3]{Ko1} and \cite[Section~7]{Ko1}).
Hence from \eqref{e4} it follows that $K^{-1}_{X,d}$ is not pseudo-effective.
\end{proof}

\begin{proposition}\label{prop2}
Assume that $g\, \geq\, 2$. Then $K_{X,d}$ is pseudo-effective if and only if
$d\, \leq\, g$.
\end{proposition}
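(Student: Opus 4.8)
The plan is to treat the two implications separately, using throughout the identity $K_{X,d}\,=\,\Theta+(g-d-1){\mathcal L}_{{\mathcal O}(y)}$ established in the proof of Proposition \ref{prop1}, which places $K_{X,d}$ at the point $(1,\,g-d-1)$ of the plane spanned by $\Theta$ and ${\mathcal L}_{{\mathcal O}(y)}$.

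For the ``if'' part, assume $d\,\le\,g$. When $d\,<\,g$ the coefficient $g-d-1$ is non-negative, so $K_{X,d}$ is a non-negative real combination of $\Theta$ and ${\mathcal L}_{{\mathcal O}(y)}$, and both are pseudo-effective: the class ${\mathcal L}_{{\mathcal O}(y)}\,=\,[{\mathcal O}_{\text{Sym}^d(X)}(X_y)]$ is represented by the effective divisor $X_y$, while $\Theta\,=\,\phi^*\theta_d$ is nef (being the pullback of the ample class $\theta_d$) and hence pseudo-effective. Consequently $K_{X,d}$ is pseudo-effective. The remaining case $d\,=\,g$ I would handle geometrically: the morphism $\phi$ of \eqref{phi} is then a birational morphism from the smooth variety $\text{Sym}^g(X)$ onto the abelian variety $\text{Pic}^g(X)$, whose canonical bundle is trivial. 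For a birational morphism between smooth projective varieties one has $K_{\text{Sym}^g(X)}\,=\,\phi^*K_{\text{Pic}^g(X)}+E$ with $E$ an effective (exceptional) divisor; since $K_{\text{Pic}^g(X)}$ is trivial, this shows $K_{X,g}$ is effective, in particular pseudo-effective.

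For the ``only if'' part, I would prove the contrapositive: if $d\,>\,g$ then $K_{X,d}$ is not pseudo-effective. The key is the fibre structure of $\phi$. For $d\,>\,g$ the general line bundle $L$ of degree $d$ is non-special, so $h^0(L)\,=\,d-g+1$ and the fibre $\phi^{-1}(L)\,=\,|L|\,\cong\,{\mathbb P}^{d-g}$ is a projective space of positive dimension. On such a fibre $F$ one has $\Theta|_F\,=\,0$ (as $\phi$ contracts $F$) and ${\mathcal L}_{{\mathcal O}(y)}|_F\,=\,{\mathcal O}_{{\mathbb P}^{d-g}}(1)$ (the divisors in $|L|$ passing through $y$ form a hyperplane), whence
\[
K_{X,d}|_F\,=\,(g-d-1)\,{\mathcal O}_{{\mathbb P}^{d-g}}(1)\,=\,{\mathcal O}_{{\mathbb P}^{d-g}}(-(d-g+1)).
\]
Since $d\,>\,g$, this restriction has negative degree and is therefore not pseudo-effective on ${\mathbb P}^{d-g}$. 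On the other hand, the fibres of $\phi$ form a family covering $\text{Sym}^d(X)$, and the restriction of a pseudo-effective class to a very general member of a covering family is again pseudo-effective (write the class as a limit of effective ${\mathbb R}$-divisors and restrict; a very general fibre avoids the vertical components and is contained in the support of none of them, so each restriction stays effective). If $K_{X,d}$ were pseudo-effective, its restriction to a very general fibre $F\,\cong\,{\mathbb P}^{d-g}$ would then be pseudo-effective, contradicting the computation above. Hence $K_{X,d}$ is not pseudo-effective.

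The step I expect to be the main obstacle is making this ``only if'' argument fully rigorous: one must confirm that the general fibre of $\phi$ really is ${\mathbb P}^{d-g}$ (general non-speciality in degree $d\,\ge\,g$) and carefully justify the restriction principle for pseudo-effective classes on a very general member of the covering family. An alternative, staying within the framework of Proposition \ref{prop1}, is to locate the fourth-quadrant extremal ray of the pseudo-effective cone in the plane spanned by $\Theta$ and ${\mathcal L}_{{\mathcal O}(y)}$ via \cite[Theorem~3]{Ko1}, and then check directly that the point $(1,\,g-d-1)$ representing $K_{X,d}$ lies in this cone precisely when $d\,\le\,g$; this would reduce the entire statement to determining one boundary slope of the cone.
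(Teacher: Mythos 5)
Your proof is correct, and the two halves relate to the paper's proof in different ways. For the ``only if'' direction ($d\,>\,g$) you do essentially what the paper does: restrict $K_{X,d}$ to the ${\mathbb C}{\mathbb P}^{d-g}$ fibres of $\phi$ over a Zariski-open subset of $\text{Pic}^d(X)$, identify the restriction as ${\mathcal O}_{{\mathbb C}{\mathbb P}^{d-g}}(g-d-1)$, and conclude; in fact you supply the justification (restriction of a pseudo-effective class to a very general member of a covering family remains pseudo-effective) that the paper compresses into the single phrase ``From this it follows that $K_{X,d}$ is not pseudo-effective.'' For the ``if'' direction ($d\,\leq\,g$) you genuinely diverge. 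The paper gives one uniform argument for all $d\,\leq\,g$: since $\phi$ is birational onto its image and the bundle $\Omega^d_{\text{Pic}^d(X)}$ of holomorphic $d$-forms is trivial, pulling back a suitable global $d$-form along the natural map $\phi^*\Omega^d_{\text{Pic}^d(X)}\,\longrightarrow\, K_{X,d}$ produces a nonzero holomorphic section of $K_{X,d}$, so $K_{X,d}$ is actually effective --- a stronger conclusion than pseudo-effectivity. You instead argue numerically when $d\,<\,g$, writing $K_{X,d}\,=\,\Theta+(g-d-1){\mathcal L}_{{\mathcal O}(y)}$ as a non-negative combination of the pseudo-effective classes $\Theta$ (nef, as pullback of an ample class) and ${\mathcal L}_{{\mathcal O}(y)}$ (effective), and you are then forced to treat $d\,=\,g$ separately via the discrepancy formula $K_{\text{Sym}^g(X)}\,=\,\phi^*K_{\text{Pic}^g(X)}+E$ with $E$ effective; note that this formula, proved by viewing the Jacobian determinant of $\phi$ as a section of $K_{\text{Sym}^g(X)}\otimes\phi^*K_{\text{Pic}^g(X)}^{-1}$, is precisely the paper's argument specialized to $d\,=\,g$. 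The trade-off: your $d\,<\,g$ case needs no analysis of the differential of $\phi$, only the N\'eron--Severi identity already extracted in the proof of Proposition \ref{prop1}, whereas the paper's route is uniform in $d$ and yields honest effectivity rather than just membership in the closed cone.
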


\begin{proof}
First assume that $d\, \leq\, g$. In this case, the map $\phi$ in \eqref{phi}
is birational onto its image. The canonical line bundle $\Omega^d_{\text{Pic}^d(X)}$ is generated by
one global section because it is trivial. Hence it follows that a nonzero section of
$\Omega^d_{\text{Pic}^d(X)}$ pulls back to a nonzero section of $K_{X,d}$ by the natural homomorphism
$\phi^* \Omega^d_{\text{Pic}^d(X)}\, \longrightarrow\, K_{X,d}$ constructed using the
differential of $\phi$. Therefore, we conclude that $K_{X,d}$ is pseudo-effective.

Now assume that $d\, \geq\, g+1$. Then there is a nonempty Zariski open subset
$U\, \subset\, \text{Pic}^d(X)$ such that the restriction
$$
\phi_0\, :=\, \phi\vert_V \, :\, V\,:=\, \phi^{-1}(U)\, \stackrel{\phi}{\longrightarrow}\, U
$$
is an algebraic fiber bundle with fiber ${\mathbb C}{\mathbb P}^{d-g}$. For any $z\, \in\, U$, the 
restriction of $K_{X,d}$ to $\phi^{-1}_0(z)\,=\, {\mathbb C}{\mathbb P}^{d-g}$ is isomorphic to 
${\mathcal O}_{{\mathbb C}{\mathbb P}^{d-g}}(g-d-1)$. Since $g-d-1\, <\, 0$, the restriction of 
$K_{X,d}$ to $\phi^{-1}_0(z)$ does not admit any nonzero holomorphic section. In fact this 
restriction is the dual of an ample line bundle on $\phi^{-1}_0(z)$. From this it follows that 
$K_{X,d}$ is not pseudo-effective.
\end{proof}

\subsection{Low genera}

First assume that $g\,=\, 0$. Then $\text{Sym}^d(X)\,=\, {\mathbb C}{\mathbb P}^d$, and hence
$K^{-1}_{X,d}$ is ample. This implies that $K_{X,d}$ is not pseudo-effective, and
$K^{-1}_{X,d}$ is pseudo-effective.

Now assume that $g\,=\,1$.

\begin{lemma}\label{lem1}
The canonical line bundle $K_{X,d}$ is pseudo-effective if and only if $d\,=\,1$.
\end{lemma}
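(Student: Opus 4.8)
The plan is to treat the two cases $d=1$ and $d\ge 2$ separately, following the structure of Proposition \ref{prop2} but exploiting the fact that for $g=1$ the Abel--Jacobi map is a genuine projective bundle over the entire base. First I would dispose of $d=1$. Here $\text{Sym}^1(X)=X$ and $K_{X,1}=K_X$ has degree $2g-2=0$; on an elliptic curve this forces $K_X\,\cong\,\mathcal{O}_X$. A trivial line bundle carries a nowhere-vanishing global section, so $K_{X,1}$ is effective, hence pseudo-effective.

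For $d\ge 2$ I would show that $K_{X,d}$ is not pseudo-effective, reusing the fiberwise argument from the second half of Proposition \ref{prop2}. Consider the morphism $\phi$ of \eqref{phi}, whose target $\text{Pic}^d(X)$ is now a one-dimensional abelian variety. By Riemann--Roch together with $K_X\,\cong\,\mathcal{O}_X$, every $L\,\in\,\text{Pic}^d(X)$ satisfies $h^0(X,L)=d$, since $h^1(X,L)=h^0(X,L^{-1})=0$ for $d\ge 1$. Consequently $\phi$ is a ${\mathbb C}{\mathbb P}^{d-1}$-bundle over all of $\text{Pic}^d(X)$ (no removal of a special locus is needed, unlike the higher-genus case).

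The key computation is the restriction of $K_{X,d}$ to a fiber. Exactly as in Proposition \ref{prop2}, specialized to $g=1$ so that the fiber is ${\mathbb C}{\mathbb P}^{d-g}={\mathbb C}{\mathbb P}^{d-1}$ and the restriction is $\mathcal{O}(g-d-1)=\mathcal{O}(-d)$, one gets that the restriction of $K_{X,d}$ to a fiber $F\,\cong\,{\mathbb C}{\mathbb P}^{d-1}$ is $\mathcal{O}_{{\mathbb C}{\mathbb P}^{d-1}}(-d)$. This can alternatively be obtained from adjunction: $F$ is a smooth fiber whose normal bundle is $\phi^*$ of the tangent space of the base at a point and hence has trivial determinant, so $K_{X,d}|_F=K_F=\mathcal{O}_{{\mathbb C}{\mathbb P}^{d-1}}(-d)$.

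Since $d\ge 2$, the fiber $F$ has positive dimension and $\mathcal{O}_{{\mathbb C}{\mathbb P}^{d-1}}(-d)$ is the dual of an ample line bundle; in particular the lines in the fibers sweep out $\text{Sym}^d(X)$ and $K_{X,d}$ has degree $-d<0$ on each such line. Because a pseudo-effective class has nonnegative degree on a covering family of curves, $K_{X,d}$ cannot be pseudo-effective, and the two cases together give that $K_{X,d}$ is pseudo-effective precisely when $d=1$. The one genuinely non-formal point is this last implication---that antiampleness along a covering family of fibers obstructs pseudo-effectivity---but it is exactly the principle already invoked at the end of Proposition \ref{prop2}, so I would simply appeal to that argument rather than re-derive it.
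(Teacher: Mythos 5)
Your proposal is correct and follows essentially the same route as the paper: for $d=1$ you use triviality of $K_X$ on an elliptic curve, and for $d\ge 2$ you use that the Abel--Jacobi map $\phi$ of \eqref{phi} is a ${\mathbb C}{\mathbb P}^{d-1}$-bundle over all of $\operatorname{Pic}^d(X)$ and that $K_{X,d}$ restricts to the anti-ample bundle ${\mathcal O}_{{\mathbb C}{\mathbb P}^{d-1}}(-d)$ on fibers, which is exactly the paper's appeal to the argument of Proposition \ref{prop2}. Your added details (the Riemann--Roch computation showing no special locus need be removed when $g=1$, the adjunction derivation of the fiber restriction, and the explicit statement of the covering-family obstruction to pseudo-effectivity) only make explicit what the paper leaves implicit.
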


\begin{proof}
If $d\,=\,1$, then $K_{X,d}\,=\, K_X$ is trivial, hence it is pseudo-effective. If $d\,>\, 1$,
then $\phi$ in \eqref{phi} is a holomorphic fiber bundle over $\text{Pic}^d(X)$ with fiber
${\mathbb C}{\mathbb P}^{d-1}$. Now as in the proof of Proposition \ref{prop2} it follows
that $K_{X,d}$ is not pseudo-effective.
\end{proof}

\begin{lemma}\label{lem2}
Assume that $g\,=\,1$. The anti-canonical line bundle $K^{-1}_{X,d}$ is pseudo-effective
for all $d$.
\end{lemma}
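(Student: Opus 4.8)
The plan is to reduce the statement to the observation that in genus one the class $\mathcal{L}_{K_X}$ vanishes, so that the anti-canonical class becomes a positive rational multiple of the manifestly effective class $\Delta$. I would first dispose of the case $d\,=\,1$ separately: here $\text{Sym}^1(X)\,=\,X$ is an elliptic curve, so $K_{X,1}\,=\,K_X$ is trivial and hence $K^{-1}_{X,1}$ is trivial, which is certainly pseudo-effective.

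For $d\,\geq\,2$ I would start from \eqref{e1}, namely $K_{X,d}\,=\,\mathcal{L}_{K_X}-\frac{1}{2}\Delta$. Since $g\,=\,1$, the degree of $K_X$ equals $2g-2\,=\,0$, and therefore $\mathcal{L}_{K_X}\,=\,(2g-2)\mathcal{L}_{\mathcal{O}(y)}\,=\,0$ in $\text{NS}(\text{Sym}^d(X))$. Consequently $K_{X,d}\,=\,-\frac{1}{2}\Delta$, and passing to the dual,
\[
K^{-1}_{X,d}\,=\,\frac{1}{2}\Delta\,\in\,\text{NS}(\text{Sym}^d(X))_{\mathbb{R}}.
\]

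The last step is to invoke effectivity: the class $\Delta$ is by definition the N\'eron--Severi class of $\mathcal{O}_{\text{Sym}^d(X)}(\Delta')$, where $\Delta'$ is the effective divisor described in the construction above; hence $\Delta$ lies in the pseudo-effective cone. Because the pseudo-effective cone is a convex cone in $\text{NS}(\text{Sym}^d(X))_{\mathbb{R}}$, it is stable under multiplication by the positive scalar $\frac{1}{2}$, so $\frac{1}{2}\Delta$, and therefore $K^{-1}_{X,d}$, is pseudo-effective.

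I do not expect any serious obstacle here. The only point that requires care is the vanishing $\mathcal{L}_{K_X}\,=\,0$, which rests solely on the elliptic curve having $\text{degree}(K_X)\,=\,0$ (equivalently, trivial canonical bundle), together with the fact that pseudo-effectivity is a property of classes in the \emph{real} N\'eron--Severi group and is insensitive to the positive factor $\frac{1}{2}$. Note also that, in contrast to Proposition \ref{prop1}, no appeal to the geometry of the pseudo-effective cone of $\text{Pic}^d(X)$ is needed, precisely because the troublesome $\mathcal{L}_{\mathcal{O}(y)}$ contribution drops out when $g\,=\,1$.
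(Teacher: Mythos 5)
Your proof is correct, but it takes a genuinely different (and more elementary) route than the paper. The paper starts from \eqref{e4}, $K^{-1}_{X,d}\,=\,(d-g+1){\mathcal L}_{{\mathcal O}(y)}-\Theta$, observes that at $g\,=\,1$ this point coincides with $(d+g-1){\mathcal L}_{{\mathcal O}(y)}-\Theta$, and then invokes Kouvidakis' description of the pseudo-effective cone \cite[p.~124, Theorem~3]{Ko1}, under which this point lies on a boundary ray of the cone. You never introduce $\Theta$ or the map $\phi$ at all: from \eqref{e1} and the vanishing ${\mathcal L}_{K_X}\,=\,(2g-2){\mathcal L}_{{\mathcal O}(y)}\,=\,0$ you get $K^{-1}_{X,d}\,=\,\frac{1}{2}\Delta$ directly, and conclude because $\Delta$ is the class of the effective divisor $\Delta'$ and the pseudo-effective cone is closed under positive scaling. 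The two arguments are secretly identical --- by \eqref{e2}, the point $(d+g-1){\mathcal L}_{{\mathcal O}(y)}-\Theta$ \emph{is} $\frac{1}{2}\Delta$ --- but your derivation of the identity avoids theta divisors and any appeal to the global structure of the pseudo-effective cone, so it is self-contained given \eqref{e1} and sidesteps any concern about the hypotheses under which Kouvidakis' theorem applies in genus one; what the paper's route buys in exchange is the sharper geometric fact that $K^{-1}_{X,d}$ sits exactly on the boundary of the cone (so it is pseudo-effective but not big), information that is not needed for the lemma. Your separate treatment of $d\,=\,1$, where $\Delta$ is not defined, is also a point of care that the paper glosses over.
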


\begin{proof}
The point $(d-g+1){\mathcal L}_{{\mathcal O}(y)} -\Theta\, \in\, 
\text{NS}(\text{Sym}^d(X))_{\mathbb R}$ coincides with the point $(d+g-1){\mathcal L}_{{\mathcal 
O}(y)} -\Theta$ if $g\,=\, 1$. Hence from \cite[p.~124, Theorem~3]{Ko1} and \eqref{e4} it follows 
immediately that $K^{-1}_{X,d}$ is pseudo-effective.
\end{proof}

\section{Scalar curvature and symmetric products} 

A Gauduchon metric on $\text{Sym}^d(X)$ is a Hermitian metric $h$ on $\text{Sym}^d(X)$
such that the associated $(1,\, 1)$--form $\omega_h$ satisfies the equation
$$
\partial\overline{\partial} \omega^{d-1}_h\,=\, 0\, .
$$
Take a Gauduchon metric $h$.
The \textit{Chern scalar curvature} of $h$ is the real valued function
$$
s_h\, :=\, \text{tr}_{\omega_h}\text{Ric}(\omega_h)\, ,$$
where $\text{Ric}(\omega_h)$ is the
Ricci curvature for $\omega_h$. The \textit{total scalar curvature} for $h$ is the 
integral
$$
\int_{\text{Sym}^d(X)} s_h \omega^{d}_h\,=\, \int_{\text{Sym}^d(X)}\text{Ric}(\omega_h)
\wedge \omega^{d-1}_h\, .
$$

Let ${\mathcal W}(\text{Sym}^d(X))$ denote the space of all
Gauduchon metrics on $\text{Sym}^d(X)$. Let
$$
{\mathcal F}\, :\, {\mathcal W}(\text{Sym}^d(X))\, \longrightarrow\,
{\mathbb R}\, ,\ \ h\, \longmapsto\, \int_{\text{Sym}^d(X)} s_h \omega^{d}_h
$$
be the map that assigns the total scalar curvature to a Gauduchon metric.

Denote
$$
{\mathbb R}^{>0}\, :=\, \{c\, \in\, {\mathbb R}\, \mid\, c\, >0\}\ \
\text{ and }\ \ {\mathbb R}^{<0}\, =\,- {\mathbb R}^{>0}\, .
$$

\begin{theorem}\label{thm1}
Let $X$ be a compact connected Riemann surface of genus $g\,\geq\, 0$;
take any $d\, \geq\, 1$. Then the following
four hold:
\begin{enumerate}
\item ${\mathcal F}({\mathcal W}({\rm Sym}^d(X)))\,=\, {\mathbb R}$ if and only
if $d\, >\,g\, \geq\, 2$.

\item ${\mathcal F}({\mathcal W}({\rm Sym}^d(X)))\,=\, {\mathbb R}^{<0}$
if $g \ge 2$ and $d \, \leq\, g$.

\item ${\mathcal F}({\mathcal W}({\rm Sym}^d(X)))\,=\, {\mathbb R}^{>0}$ if either 
 $g\, =\, 0$ or $g\, =\, 1 \, <\, d$.

\item ${\mathcal F}({\mathcal W}({\rm Sym}^d(X)))\,=\, 0$ if $g\, =\, 1 \, =\, d$.
\end{enumerate}
\end{theorem}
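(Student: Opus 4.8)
The plan is to express the functional $\mathcal{F}$ in purely cohomological terms and then to read off its image from the pseudo-effectivity of $K_{X,d}$ and $K^{-1}_{X,d}$ established in the previous section, invoking the criterion of Yang \cite{Ya}.

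First I would record the cohomological meaning of $\mathcal{F}$. For a Gauduchon metric $h$ the Chern--Ricci form $\mathrm{Ric}(\omega_h)$ is a $d$--closed real $(1,1)$--form representing $2\pi\, c_1(K^{-1}_{X,d})$. The role of the Gauduchon condition $\partial\overline{\partial}\omega^{d-1}_h=0$ is that the pairing $\int_{\mathrm{Sym}^d(X)}\gamma\wedge\omega^{d-1}_h$ then depends only on the de Rham class of the closed form $\gamma$: two representatives of $2\pi\, c_1(K^{-1}_{X,d})$ differ by $i\partial\overline{\partial}f$, and integrating by parts gives $\int i\partial\overline{\partial}f\wedge\omega^{d-1}_h=\int f\, i\partial\overline{\partial}\omega^{d-1}_h=0$. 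Hence
$$
\mathcal{F}(h)\,=\,2\pi\int_{\mathrm{Sym}^d(X)}c_1(K^{-1}_{X,d})\wedge\omega^{d-1}_h\,=\,-2\pi\deg_{\omega_h}(K_{X,d})\, ,
$$
so that $\mathcal{F}(\mathcal{W}(\mathrm{Sym}^d(X)))$ is exactly $2\pi$ times the set of Gauduchon degrees of $K^{-1}_{X,d}$.

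Next I would appeal to Yang's theorem. For $d\ge 2$, when $\mathrm{Sym}^d(X)$ has complex dimension at least $2$, the classes $[\omega^{d-1}_h]$ (as $h$ ranges over Gauduchon metrics) sweep out an open convex cone $\mathcal{G}$ in the relevant real Aeppli cohomology, dual to the pseudo-effective cone, and $\mathcal{F}$ is $2\pi$ times the restriction to $\mathcal{G}$ of the linear functional $\ell:=\langle c_1(K^{-1}_{X,d}),\,\cdot\,\rangle$; this is the content imported from \cite{Ya}. The image of a nonzero linear functional on an open, scale-invariant convex cone is an open scale-invariant interval, hence one of $\mathbb{R}^{>0}$, $\mathbb{R}^{<0}$, $\mathbb{R}$, and it is $\{0\}$ precisely when $\ell$ vanishes on $\mathcal{G}$. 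Using the duality ($\ell\ge 0$ on $\mathcal{G}$ iff $K^{-1}_{X,d}$ is pseudo-effective; $\ell\le 0$ on $\mathcal{G}$ iff $K_{X,d}$ is pseudo-effective) this gives the four-way dichotomy: $\mathcal{F}(\mathcal{W}(\mathrm{Sym}^d(X)))$ equals $\{0\}$, $\mathbb{R}^{>0}$, $\mathbb{R}^{<0}$ or $\mathbb{R}$ according as both bundles are pseudo-effective, only $K^{-1}_{X,d}$ is, only $K_{X,d}$ is, or neither is. The remaining case $d=1$ is classical: here $\mathrm{Sym}^1(X)=X$, every metric is Gauduchon, and $\mathcal{F}(h)=2\pi\,c_1(X)[X]=2\pi(2-2g)$ is the Gauss--Bonnet constant, so the image is the single value $2\pi(2-2g)$, which is positive, zero or negative exactly as predicted by cases (3), (4), (2).

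It then remains to tabulate pseudo-effectivity in each range of $(g,d)$ and match it to the four configurations. If $g=0$, then $K^{-1}_{X,d}$ is ample, hence pseudo-effective, while $K_{X,d}$ is not, giving case~(3). If $g=1=d$, then $K_{X,d}=K_X$ is trivial, so both bundles are pseudo-effective and we are in case~(4); if $g=1<d$, then Lemma~\ref{lem1} gives that $K_{X,d}$ is not pseudo-effective while Lemma~\ref{lem2} gives that $K^{-1}_{X,d}$ is, again case~(3). If $g\ge 2$, then Proposition~\ref{prop1} shows $K^{-1}_{X,d}$ is never pseudo-effective, while Proposition~\ref{prop2} shows $K_{X,d}$ is pseudo-effective iff $d\le g$; thus $d\le g$ lands in case~(2) and $d>g$ lands in case~(1). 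These five ranges are disjoint and exhaust all $(g,d)$ with $g\ge 0$, $d\ge 1$, and the four target sets $\mathbb{R}$, $\mathbb{R}^{<0}$, $\mathbb{R}^{>0}$, $\{0\}$ are pairwise distinct; hence each ``if'' statement is automatically an ``if and only if'', which in particular yields the converse direction in~(1).

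The step I expect to be the main obstacle is the \emph{strictness} of the half-line assertions in (2) and (3): showing that $0$ is excluded, i.e. that $\deg_{\omega}(K^{-1}_{X,d})$ has a constant strict sign over all Gauduchon $\omega$ rather than merely $\ge 0$ or $\le 0$. This is exactly where the structural input of \cite{Ya} is needed, namely that the Gauduchon cone $\mathcal{G}$ is \emph{open} and dual to the pseudo-effective cone. Granting this, strictness is automatic: a nonzero linear functional that is $\le 0$ on an open cone is everywhere $<0$ on it, so $K^{-1}_{X,d}$ failing to be pseudo-effective forces $\deg_\omega(K^{-1}_{X,d})<0$ for every $\omega$ (and symmetrically for $K_{X,d}$). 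The only other point requiring separate attention is the boundary case $d=1$, settled above by Gauss--Bonnet.
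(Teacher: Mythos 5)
Your argument for $d\ge 2$ is sound, and it takes a genuinely different route from the paper's. The paper treats Yang's Theorem~1.1 as a black box: four ``if and only if'' clauses phrased in terms of pseudo-effectivity \emph{and unitary flatness} of $K_{X,d}$ and $K^{-1}_{X,d}$, into which it feeds Propositions~\ref{prop1}, \ref{prop2} and Lemmas~\ref{lem1}, \ref{lem2}. You instead reconstruct the mechanism underlying that theorem: openness and convexity of the Gauduchon cone in Aeppli cohomology (Michelsohn, Popovici) together with Lamari-type duality between that cone and the pseudo-effective cone, so that $\mathcal F$ becomes (up to positive constants) a linear functional on an open convex cone, and the possible images are read off from the signs of that functional. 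This buys three things. First, your dichotomy is stated purely in terms of pseudo-effectivity, with no unitary-flatness hypotheses; the paper's proof actually needs these (for case~(2), Yang's criterion requires $K_{X,d}$ to be pseudo-effective \emph{and not unitary flat}, and the paper never checks the latter explicitly --- it does follow from Proposition~\ref{prop1}, since a unitary flat $K_{X,d}$ would make $K^{-1}_{X,d}$ pseudo-effective). Second, strictness of the half-line conclusions is transparent: a nonzero linear functional that is of one sign on an open cone has no zeros there. Third, the ``only if'' in~(1) comes for free from disjointness and exhaustiveness of the ranges, which is also implicitly how the paper gets it. One point you should make explicit: Lamari duality concerns pseudo-effectivity in the sense of positive currents, whereas the paper's cone is the closure of the cone generated by effective divisors in $\text{NS}(\text{Sym}^d(X))_{\mathbb R}$; these agree here because $\text{Sym}^d(X)$ is projective.

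However, at $d=1$ your own computation contradicts the statement you claim to verify, and the sentence ``which is positive, zero or negative exactly as predicted'' conceals this. As you note, for $d=1$ every Hermitian metric on $X$ is Gauduchon and $\mathcal F$ is the topological constant $2\pi(2-2g)$, so $\mathcal F(\mathcal W(X))$ is a one-point set. A one-point set does not equal ${\mathbb R}^{>0}$ or ${\mathbb R}^{<0}$. Hence, under the set-equality reading that the theorem plainly intends, statement~(3) fails for $g=0$, $d=1$, and statement~(2) fails for $g\ge 2$, $d=1$; what your argument establishes there is only that the image has the asserted sign. This defect is not yours alone: it sits in the theorem as stated, and in the paper's proof it is hidden inside the unqualified quotation of Yang's theorem, whose image statements cannot hold in complex dimension one for exactly this Gauss--Bonnet reason (the Gauduchon cone degenerates to a point, so the scaling argument producing a full half-line is unavailable). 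The honest conclusion of your proof is that (2) and (3) hold for $d\ge 2$ and must be weakened to sign statements when $d=1$; you should flag this as an erratum to the theorem rather than assert agreement with it.
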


\begin{proof}
In our context, Theorem 1.1 of \cite{Ya} says the following:
\begin{enumerate}
\item ${\mathcal F}({\mathcal W}({\rm Sym}^d(X)))\,=\,\mathbb R$ if and only if
neither $K_{X,d}$ nor $K^{-1}_{X,d}$ is pseudo-effective;

\item ${\mathcal F}({\mathcal W}({\rm Sym}^d(X)))\,=\,{\mathbb R}^{< 0}$ if and only if
$K_{X,d}$ is pseudo-effective but not unitary flat;

\item ${\mathcal F}({\mathcal W}({\rm Sym}^d(X)))\,=\,{\mathbb R}^{>0}$ if and only if
$K^{-1}_{X,d}$ is pseudo-effective but not unitary flat;

\item ${\mathcal F}({\mathcal W}({\rm Sym}^d(X)))\,=\, 0$ if and only if
$K_{X,d}$ is unitary flat.
\end{enumerate}

First assume that $g\, \geq\, 2$. In view of the above result of \cite{Ya}, from
Proposition \ref{prop1} and Proposition \ref{prop2} it follows that
${\mathcal F}({\mathcal W}({\rm Sym}^d(X)))\,=\, {\mathbb R}$
if and only if $d\,>\, g$. It also follows from Proposition \ref{prop1} and Proposition \ref{prop2}
that ${\mathcal F}({\mathcal W}({\rm Sym}^d(X)))\,=\, {\mathbb R}^{<0}$ if $d\, \leq\, g$.

If $g\,=\, 0$, then ${\mathcal F}({\mathcal W}({\rm Sym}^d(X)))\,=\,{\mathbb R}^{>0}$ for
all $d$, because $K^{-1}_{X,d}$ is ample.

Now assume that $g\,=\,1$. If $d\,=\, 1$, then $K_{X,d}$ is trivial, and hence from the fourth 
statement of the above theorem of \cite{Ya} it follows that ${\mathcal F}({\mathcal W}({\rm 
Sym}^d(X)))\,=\, 0$. Next we observe that $K^{-1}_{X,d}$ is not unitary flat when $d\, >\, g$.
Indeed, in this case $\phi$ in \eqref{phi} is a
holomorphic fiber bundle with fibers ${\mathbb C}{\mathbb P}^{d-1}$.
The restriction of $K^{-1}_{X,d}$ to a fiber of $\phi$ is ample, so
$K^{-1}_{X,d}$ is not unitary flat. Therefore, from Lemma \ref{lem2} and the
third statement of the above theorem of \cite{Ya} we conclude that
${\mathcal F}({\mathcal W}({\rm Sym}^d(X)))\,=\,{\mathbb R}^{>0}$ if $d\, >\,g\,=\, 1$.
\end{proof}

\begin{theorem}
Let $X$ be a compact Riemann surface of genus $g \,\ge\, 0$. For $d \,\ge\, 1$, the $d$-fold
symmetric product ${\rm Sym}^d(X)$ admits a Hermitian metric with 
\begin{enumerate}
\item negative Chern scalar curvature if 
and only if $g \,\ge \, 2$, and 

\item positive Chern scalar curvature if and
only if and $d\, >\, g$.
\end{enumerate}
\end{theorem}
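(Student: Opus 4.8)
The plan is to obtain this statement as a direct corollary of Theorem~\ref{thm1}. The only point that goes beyond Theorem~\ref{thm1} is the passage from the \emph{total} Chern scalar curvature, whose range $\mathcal{F}(\mathcal{W}(\text{Sym}^d(X)))$ has just been computed, to the existence of a single Hermitian metric whose Chern scalar curvature has a fixed sign \emph{pointwise}. I would first record the bridge, which is part of the circle of results in \cite{Ya}: the manifold $\text{Sym}^d(X)$ admits a Hermitian metric with everywhere negative (respectively, everywhere positive) Chern scalar curvature if and only if $\mathcal{F}(\mathcal{W}(\text{Sym}^d(X)))$ contains a negative (respectively, positive) value; equivalently, if and only if $K^{-1}_{X,d}$ (respectively, $K_{X,d}$) fails to be pseudo-effective.

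The transparent half of this bridge is the implication from total to pointwise, and I would prove it by a conformal deformation. By Gauduchon's theorem each conformal class contains a Gauduchon metric, so fix a Gauduchon metric $\omega$ with $\mathcal{F}(\omega)<0$. Under $\omega\,\longmapsto\, e^{f}\omega$ the Chern scalar curvature transforms as $s_{e^{f}\omega}\,=\, e^{-f}(s_\omega - d\,\Delta_\omega f)$, where $\Delta_\omega f\,=\,\text{tr}_\omega(i\partial\overline\partial f)$. The Gauduchon identity $\partial\overline\partial\,\omega^{d-1}\,=\,0$ forces $\int_{\text{Sym}^d(X)}(\Delta_\omega f)\,\omega^{d}\,=\,0$ for every $f$, so the constant function lies in the kernel of the formal adjoint of $\Delta_\omega$ and the image of $\Delta_\omega$ is exactly the space of functions of $\omega^{d}$--mean zero. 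Consequently the linear equation $d\,\Delta_\omega f\,=\, s_\omega - c$ is solvable for the constant $c\,=\,\mathcal{F}(\omega)/\!\int_{\text{Sym}^d(X)}\omega^{d}\,<\,0$, and the metric $e^{f}\omega$ then has $s_{e^{f}\omega}\,=\, e^{-f}c\,<\,0$ everywhere. The positive case is identical with $c>0$; note that only constant sign, hence a \emph{linear} equation, is required, so no Kazdan--Warner type obstruction intervenes.

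With the bridge available I would read off the theorem from Theorem~\ref{thm1} case by case. By that theorem $\mathcal{F}(\mathcal{W}(\text{Sym}^d(X)))$ contains a negative number exactly in its cases (1) and (2), i.e. precisely when $g\,\ge\,2$, while for $g\,=\,0$, for $g\,=\,1\,<\,d$, and for $g\,=\,1\,=\,d$ the range is contained in $\mathbb{R}^{\ge 0}$; this yields the negative assertion. Likewise the range contains a positive number exactly in cases (1) and (3), which together amount to the single condition $d\,>\,g$ (note $d\,\ge\,1\,>\,0\,=\,g$ when $g=0$, and $d\,>\,1\,=\,g$ when $g=1<d$), while cases (2) and (4) give no positive value; this yields the positive assertion. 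Alternatively, the same bookkeeping can be carried out directly from Proposition~\ref{prop1}, Proposition~\ref{prop2} and Lemmas~\ref{lem1}--\ref{lem2} through the pseudo-effectivity form of the bridge.

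The main obstacle is the converse half of the bridge, namely that a metric with pointwise negative (resp. positive) Chern scalar curvature forces some \emph{Gauduchon} metric to have negative (resp. positive) total scalar curvature. This is subtle because the total scalar curvature is a sign-definite invariant only through Gauduchon representatives: for a non-Gauduchon metric the integral $\int \text{Ric}\wedge\omega^{d-1}$ depends on $\omega^{d-1}$ and not merely on $c_1$, so the naive argument of simply passing to the Gauduchon representative does not control the sign. I would therefore invoke this direction from \cite{Ya}, where it rests on the duality between pseudo-effectivity of $K^{\pm1}_{X,d}$ and positivity of Gauduchon degrees, rather than reprove it. Everything else reduces to the elementary case analysis above.
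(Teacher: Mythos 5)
Your proposal is correct, but it takes a genuinely different route from the paper. The paper never passes through Theorem \ref{thm1}: it quotes Theorem 1.3 of \cite{Ya} --- existence of a Hermitian metric with negative (resp.\ positive) Chern scalar curvature is equivalent to $K^{-1}_{X,d}$ (resp.\ $K_{X,d}$) not being pseudo-effective --- and then reads the answer off from Proposition \ref{prop1}, Proposition \ref{prop2}, Lemma \ref{lem1} and Lemma \ref{lem2}; there is no analysis in its proof at all. You instead deduce the statement from Theorem \ref{thm1} via a ``total-to-pointwise'' bridge, and you supply the analytic half of that bridge yourself: the conformal formula $s_{e^f\omega}=e^{-f}(s_\omega-d\,\Delta_\omega f)$ together with solvability of the linear equation $d\,\Delta_\omega f=s_\omega-c$, which is justified exactly as you say (ellipticity, index zero, and the Gauduchon identity force the image of $\Delta_\omega$ to be the $\omega^d$-mean-zero functions). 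This correctly re-derives the relevant half of Yang's Theorem 1.3 from his Theorem 1.1; what it buys is transparency about where the analysis lives, at the cost of length. Your bookkeeping from Theorem \ref{thm1} is also accurate: negative values of $\mathcal F$ occur exactly in its cases (1)--(2), i.e.\ $g\ge 2$, and positive values exactly in cases (1) and (3), i.e.\ $d>g$. One correction: the converse direction that you defer to \cite{Ya} as ``subtle'' is in fact elementary, and your reason for distrusting the naive argument is mistaken. If $s_\omega<0$ everywhere and $\omega_G=e^{f}\omega$ is the Gauduchon representative of its conformal class, then $\text{Ric}(\omega_G)=\text{Ric}(\omega)-d\,i\partial\overline{\partial}f$, and $\int i\partial\overline{\partial}f\wedge\omega_G^{d-1}=\int f\,i\partial\overline{\partial}\,\omega_G^{d-1}=0$ by Stokes and the Gauduchon condition, so that $\mathcal F(\omega_G)=\int e^{(d-1)f}s_\omega\,\omega^{d}<0$: the term you feared would spoil the sign integrates away precisely because $\omega_G$ is Gauduchon. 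Including this short computation would make your bridge fully self-contained (given Gauduchon's existence theorem), so that your argument would use \cite{Ya} only through Theorem \ref{thm1} itself --- a more self-contained proof than the paper's, though a longer one.
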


\begin{proof}
In our context, Theorem 1.3 of \cite{Ya} says the following:
\begin{enumerate}
\item ${\rm Sym}^d(X)$ admits a Hermitian metric with negative Chern scalar curvature if
and only if $K^{-1}_{X,d}$ is not pseudo-effective;

\item ${\rm Sym}^d(X)$ admits a Hermitian metric with positive Chern scalar curvature if
and only if $K_{X,d}$ is not pseudo-effective.
\end{enumerate}

First assume that $g\, \geq\, 2$. Now from Proposition \ref{prop1} it follows that
${\rm Sym}^d(X)$ admits a Hermitian metric with negative Chern scalar curvature for every $d$.
From Proposition \ref{prop2} it follows that
${\rm Sym}^d(X)$ admits a Hermitian metric with positive Chern scalar curvature if and
only if $d\, >\, g$.

If $g\,=\, 0$, then ${\rm Sym}^d(X)$ admits a Hermitian metric with positive Chern scalar curvature
for all $d$.

Now assume that $g\,=\, 1$. From Lemma \ref{lem1} it follows that for each $d\, \geq\, 2$, the 
symmetric product ${\rm Sym}^d(X)$ admits a Hermitian metric with positive Chern scalar curvature. On 
the other hand, from Lemma \ref{lem2} it follows that that ${\rm Sym}^d(X)$ does not admit a 
Hermitian metric with negative Chern scalar curvature for all $d\, \geq\, 2$. If $d\,=\,1\,=\, g$,
then there is no Hermitian metric with positive or negative Chern scalar curvature. 
\end{proof}


\end{document}